%
%
\documentclass[a4paper,12pt,twoside,leqno,final]{amsart}
\usepackage{amsmath}
\usepackage{amssymb}

\setlength{\textwidth}{15cm}
\setlength{\textheight}{22cm}
\setlength{\oddsidemargin}{2cm}
\setlength{\hoffset}{-2cm}
\setlength{\voffset}{-1cm}


\newtheorem{thm}{Theorem}[section]
\newtheorem{lem}[thm]{Lemma}
\newtheorem{defn}[thm]{Definition}

\theoremstyle{definition}
\newtheorem{exmp}{Example}[section]

\newcommand{\C}{{\mathbb C}}
\newcommand{\D}{{\mathbb D}}
\newcommand{\R}{{\mathbb R}}
\newcommand{\T}{{\mathbb T}}
\newcommand{\Z}{{\mathbb Z}}

\newcommand{\La}{\Lambda}

\newcommand{\f}{\frac}
\newcommand{\ov}{\overline}
\newcommand{\wt}{\widetilde}
\newcommand{\al}{\alpha}
\newcommand{\be}{\beta}

\newcommand{\ga}{\gamma}

\newcommand{\de}{\delta}
\newcommand{\la}{\lambda}
\newcommand{\ze}{\zeta}
\renewcommand{\th}{\theta}

\newcommand{\ph}{\varphi}

\newcommand{\const}{\text{\rm const}}

\numberwithin{equation}{section}

\title[Nearly outer functions]
{Nearly outer functions as extreme\\ 
points in punctured Hardy spaces}
\author{Konstantin M. Dyakonov}
\address{Departament de Matem\`atiques i Inform\`atica, IMUB, BGSMath, Universitat de Barcelona, Gran Via 585, E-08007 Barcelona, Spain}
\address{ICREA, Pg. Llu\'is Companys 23, E-08010 Barcelona, Spain}
\email{konstantin.dyakonov@icrea.cat}
\keywords{Hardy space, spectral hole, inner function, finite Blaschke product, outer function, extreme point, exposed point}
\subjclass[2010]{30H10, 30J10, 42A32, 46A55}
\thanks{Supported in part by grant MTM2017-83499-P from El Ministerio de Ciencia e Innovaci\'on (Spain) and grant 2017-SGR-358 from AGAUR (Generalitat de Catalunya).}

\begin{document}
\begin{abstract} The Hardy space $H^1$ consists of the integrable functions $f$ on the unit circle whose Fourier coefficients $\widehat f(k)$ vanish for $k<0$. We are concerned with $H^1$ functions that have some additional (finitely many) holes in the spectrum, so we fix a finite set $\mathcal K$ of positive integers and consider the \lq\lq punctured" Hardy space 
$$H^1_{\mathcal K}:=\{f\in H^1:\,\widehat f(k)=0\,\,\,\text{\rm for all }\,
k\in\mathcal K\}.$$
We then investigate the geometry of the unit ball in $H^1_{\mathcal K}$. In particular, the extreme points of the ball are identified as those unit-norm functions in $H^1_{\mathcal K}$ which are not too far from being outer (in the appropriate sense). This extends a theorem of de Leeuw and Rudin that deals with the classical $H^1$ and characterizes its extreme points as outer functions. We also discuss exposed points of the unit ball in $H^1_{\mathcal K}$.
\end{abstract}

\maketitle

\section{Introduction and results} 

We shall be concerned with certain Hardy-type spaces on the circle
$$\T:=\{\ze\in\C:\,|\ze|=1\}.$$ 
The functions to be dealt with are complex-valued and live almost everywhere on $\T$, which is endowed with normalized arc length measure. The spaces $L^p=L^p(\T)$ with $0<p\le\infty$ are then defined in the usual way. Among these, of special relevance to us is $L^1$, the space of integrable functions on $\T$ with norm 
\begin{equation}\label{eqn:Lonenorm}
\|f\|_1:=\f 1{2\pi}\int_\T|f(\ze)|\,|d\ze|,
\end{equation}
as well as some of its subspaces, to be specified shortly. 

\par For a given function $f\in L^1$, we consider the sequence of its {\it Fourier coefficients}
$$\widehat f(k):=\f 1{2\pi}\int_\T\ov\ze^kf(\ze)\,|d\ze|,\qquad k\in\Z,$$
and the set 
$$\text{\rm spec}\,f:=\{k\in\Z:\,\widehat f(k)\ne0\},$$
known as the {\it spectrum} of $f$. Now, the {\it Hardy space} $H^1$ is defined by $$H^1:=\{f\in L^1:\,\text{\rm spec}\,f\subset\Z_+\},$$
where $\Z_+:=\{0,1,2,\dots\}$, and is equipped with the $L^1$ norm \eqref{eqn:Lonenorm}. Equivalently (see \cite[Chapter II]{G}), $H^1$ consists of all $L^1$ functions whose Poisson integral (i.e., harmonic extension) is holomorphic on the disk 
$$\D:=\{z\in\C:\,|z|<1\}.$$
Using this extension, we therefore may---and will---also treat elements of $H^1$ as holomorphic functions on $\D$. 

\par Our starting point is a beautiful theorem of de Leeuw and Rudin, which describes the extreme points of the unit ball in $H^1$. This will be stated in a moment, whereupon certain finite-dimensional perturbations of that result will be discussed. But first we have to fix a bit of terminology and notation. 

\par Given a Banach space $X=(X,\|\cdot\|)$, we write  
$$\text{\rm ball}(X):=\{x\in X:\,\|x\|\le1\}$$
for the closed unit ball of $X$. A point of $\text{\rm ball}(X)$ is said to be {\it extreme} for the ball if it is not the midpoint of any nondegenerate line segment contained in $\text{\rm ball}(X)$. Of course, every extreme point $x$ of $\text{\rm ball}(X)$ satisfies $\|x\|=1$. 

\par Further, we need to recall the canonical (inner-outer) factorization theorem for $H^1$ functions. By definition, a function $I$ in $H^\infty:=H^1\cap L^\infty$ is {\it inner} if $|I|=1$ a.e. on $\T$. Also, a non-null function $F\in H^1$ is termed {\it outer} if 
$$\log|F(0)|=\f 1{2\pi}\int_\T\log|F(\ze)|\,|d\ze|.$$
It is well known that the general form of a function $f\in H^1$, $f\not\equiv0$, is given by 
\begin{equation}\label{eqn:fif}
f=IF,
\end{equation}
where $I$ is inner and $F$ is outer. Moreover, the two factors are uniquely determined by $f$ up to a multiplicative constant of modulus $1$. We refer to \cite{G} or \cite{Hof} for a systematic treatment of these matters in the framework of general $H^p$ spaces. 

\par Now, the de Leeuw--Rudin theorem states that the extreme points of $\text{\rm ball}(H^1)$ are precisely the outer functions $F\in H^1$ with $\|F\|_1=1$. In addition to the original paper \cite{dLR}, we cite \cite[Chapter IV]{G} and \cite[Chapter 9]{Hof}, where alternative presentations are given. 

\par Our purpose here is to find out what happens for subspaces of $H^1$ that consist of functions with smaller spectra. We do not want to deviate too much from the classical $H^1$, so we consider the case of finitely many additional \lq\lq spectral holes." Precisely speaking, we fix some positive integers 
$$k_1<k_2<\dots<k_M$$
and move from generic functions $f\in H^1$ to those satisfying 
$$\widehat f(k_1)=\dots=\widehat f(k_M)=0.$$
The functions that arise have their spectra contained in the \lq\lq punctured" set $\Z_+\setminus\mathcal K$, where 
\begin{equation}\label{eqn:defmathcalk}
\mathcal K:=\{k_1,\dots,k_M\}.
\end{equation}
The subspace they populate is therefore 
$$H^1_{\mathcal K}:=\{f\in H^1:\,\text{\rm spec}\,f\subset
\Z_+\setminus\mathcal K\}$$
(normed by \eqref{eqn:Lonenorm} again), which might be called the {\it punctured}, or rather {\it $\mathcal K$-punctured}, {\it Hardy space}. The number $M:=\#\mathcal K$ was so far assumed to be a positive integer, but it is convenient to allow the value $M=0$ as well. In the latter case, the convention is that $\mathcal K=\emptyset$, so $H^1_{\mathcal K}=H^1$ and we are back to the classical situation. 

\par In what follows, we are concerned with the geometry of 
$\text{\rm ball}(H^1_{\mathcal K})$, the unit ball in $H^1_{\mathcal K}$, primarily with the structure of its extreme points. Recently, a similar study was carried out in \cite{DAdv2021} for a certain family of {\it finite-dimensional} subspaces in $H^1$; each of those was associated with a finite set $\La\subset\Z_+$ and consisted of the polynomials $p$ with $\text{\rm spec}\,p\subset\La$. By contrast, our current spaces $H^1_{\mathcal K}$ are of {\it finite codimension} in $H^1$, so we are now moving to the opposite extreme. The intermediate cases---not treated here---might also be of interest. 

\par We briefly mention some other types of subspaces in $H^1$ where the geometry of the unit ball has been investigated. Namely, this was done for the so-called model subspaces \cite{DKal, DSib}, and more generally, for kernels of Toeplitz operators \cite{DPAMS, DAMP}. Spaces of polynomials of fixed degree---and their Paley--Wiener type analogues on the real line---fit into that framework and were studied in more detail; see \cite{DMRL2000}. However, spaces of functions with spectral gaps, such as $H^1_{\mathcal K}$ (or the lacunary polynomial spaces from \cite{DAdv2021}), are different in nature and require a new method. In particular, one of the difficulties to be faced in the \lq\lq punctured spectrum" case is that such spaces no longer admit division by inner factors. 

\par Our criterion for a unit-norm function $f\in H^1_{\mathcal K}$ to be an extreme point of $\text{\rm ball}(H^1_{\mathcal K})$ will be stated in terms of the function's canonical factorization \eqref{eqn:fif}. The set $\mathcal K$ of forbidden frequencies being finite, it seems natural to expect that the criterion should be fairly reminiscent of its classical prototype (i.e., the de Leeuw--Rudin theorem), so the functions that obey it are presumably not too far from being outer. We shall indeed identify the extreme points $f$ of $\text{\rm ball}(H^1_{\mathcal K})$ as \lq\lq nearly outer" functions of norm $1$. Specifically, we shall see that the inner factors $I$ of such functions are rather tame (rational, and with a nice bound on the degree); in addition, there is an interplay between the two factors, $I$ and $F$, to be described below. 

\par Now, let us recall that every inner function has the form $BS$, where $B$ is a {\it Blaschke product} and $S$ a {\it singular inner function}. The former factor is determined by its zero sequence in $\D$, while the latter has no zeros and is generated---in a certain canonical way---by a singular measure on $\T$. We refer to \cite[Chapter II]{G} for the definitions and explicit formulas, as well as for the fact that a general inner function decomposes as claimed above. 

\par There is a tiny---and particularly nice---class of inner functions that we need to single out, namely, the {\it finite Blaschke products}. These are rational functions of the form 
\begin{equation}\label{eqn:finblaprodegm}
z\mapsto c\prod_{j=1}^m\f{z-a_j}{1-\ov a_jz},
\end{equation}
where $a_1,\dots,a_m$ are points in $\D$ and $c$ is a unimodular constant. The number $m(\in\Z_+)$ is then the {\it degree} of the finite Blaschke product \eqref{eqn:finblaprodegm}. (In general, we define the degree of a rational function $R$ as the number of its poles---counting multiplicities---on the Riemann sphere $\C\cup\{\infty\}$, and we denote this number by $\deg R$.) When $m=0$, it is of course understood that \eqref{eqn:finblaprodegm} reduces to the constant function $c$. 

\par Our characterization of the extreme points of $\text{\rm ball}(H^1_{\mathcal K})$ splits into two conditions. First we verify that if a function $f\in H^1_{\mathcal K}$ is extreme for the ball, then the inner factor $I$ in its canonical factorization \eqref{eqn:fif} is a finite Blaschke product of degree not exceeding $M(=\#\mathcal K)$. In other words, we necessarily have 
$$I(z)=\prod_{j=1}^m\f{z-a_j}{1-\ov a_jz}$$
for some $a_1,\dots,a_m\in\D$, where $m(=\deg I)$ satisfies $0\le m\le M$. (The constant $c$ in \eqref{eqn:finblaprodegm} is now taken to be $1$; clearly, nothing is lost by doing so.)

\par Secondly, assuming that the inner factor $I$ of a unit-norm function $f\in H^1_{\mathcal K}$ has the above form, we find out what else is needed to make $f$ extreme. The answer is given in terms of a certain matrix $\mathfrak M$, built from $F$ (the outer factor of $f$) and the zeros $a_1,\dots,a_m$ of $I$ as described below. 

\par Consider the (outer) function 
\begin{equation}\label{eqn:deffzero}
F_0(z):=F(z)\prod_{j=1}^m(1-\ov a_jz)^{-2}
\end{equation}
and its coefficients 
\begin{equation}\label{eqn:defofck}
C_k:=\widehat F_0(k),\qquad k\in\Z. 
\end{equation}
Since $F_0\in H^1$, it follows in particular that $C_k=0$ for all $k<0$. We further define 
\begin{equation}\label{eqn:defakbk}
A(k):=\text{\rm Re}\,C_k,\qquad B(k):=\text{\rm Im}\,C_k\qquad(k\in\Z)
\end{equation}
and introduce, for $j=1,\dots,M$ and $l=0,\dots,m$, the numbers 
\begin{equation}\label{eqn:abplus}
A^+_{j,l}:=A(k_j+l-m)+A(k_j-l-m),\qquad B^+_{j,l}:=B(k_j+l-m)+B(k_j-l-m)
\end{equation}
and
\begin{equation}\label{eqn:abminus}
A^-_{j,l}:=A(k_j+l-m)-A(k_j-l-m),\qquad B^-_{j,l}:=B(k_j+l-m)-B(k_j-l-m).
\end{equation}
(The integers $k_j$ are, of course, those from \eqref{eqn:defmathcalk}.) Next, we build the $M\times(m+1)$ matrices 
\begin{equation}\label{eqn:plusmat}
\mathcal A^+:=\left\{A^+_{j,l}\right\},\qquad\mathcal B^+:=\left\{B^+_{j,l}\right\}
\end{equation}
and the $M\times m$ matrices
\begin{equation}\label{eqn:minusmat}
\mathcal A^-:=\left\{A^-_{j,l}\right\},\qquad
\mathcal B^-:=\left\{B^-_{j,l}\right\}.
\end{equation}
Here, the row index $j$ always runs from $1$ to $M$, whereas the column index $l$ runs from $0$ to $m$ for each of the two matrices in \eqref{eqn:plusmat}, and from $1$ to $m$ for each of those in \eqref{eqn:minusmat}. 
\par Finally, we construct the block matrix 
\begin{equation}\label{eqn:blockmatrix}
\mathfrak M=\mathfrak M_{\mathcal K}\left(F,\{a_j\}_{j=1}^m\right):=
\begin{pmatrix}
\mathcal A^+ & \mathcal B^- \\
\mathcal B^+ & -\mathcal A^-
\end{pmatrix},
\end{equation}
which has $2M$ rows and $2m+1$ columns. 

\par Our main result can now be stated readily.

\begin{thm}\label{thm:extrpoipuhone} Let $f\in H^1_{\mathcal K}$ be a function with $\|f\|_1=1$ whose canonical factorization is $f=IF$, with $I$ inner and $F$ outer. Then $f$ is an extreme point of $\text{\rm ball}(H^1_{\mathcal K})$ if and only if the following two conditions hold: 
\par{\rm (a)} $I$ is a finite Blaschke product, with $\deg I(=:m)$ not exceeding $M(=\#\mathcal K)$.
\par{\rm (b)} The matrix $\mathfrak M=\mathfrak M_{\mathcal K}\left(F,\{a_j\}_{j=1}^m\right)$, built as above from $F$ and the zeros $\{a_j\}_{j=1}^m$ of $I$, has rank $2m$.
\end{thm}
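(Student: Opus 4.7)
The plan is a de Leeuw--Rudin-style reduction combined with an explicit coordinate computation. The standard convexity argument yields the equivalence: $f$ is extreme in $\mathrm{ball}(H^1_{\mathcal{K}})$ if and only if the real vector space $V(f) := \{\psi \in L^\infty(\T, \R) : \psi f \in H^1_{\mathcal{K}}\}$ consists of the constants alone. Indeed, if $\psi \in V(f)$ is nonconstant, setting $c := \|f\|_1^{-1}\!\int|f|\psi\,dm$, the function $\psi - c$ is a nonzero real bounded function with $(\psi-c) f \in H^1_{\mathcal{K}}$ and $\int|f|(\psi - c)\,dm = 0$; rescaling so that $\|\psi-c\|_\infty \leq 1$ and setting $h := (\psi-c)f$, one gets $\|f\pm h\|_1 = 1$ with $h \ne 0$, contradicting extremality. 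So the task is to compute $\dim_\R V(f)$.

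\emph{Step 1: Rigidity of the inner factor.} Consider the ambient space $W(f) := \{\psi \in L^\infty(\T,\R): \psi f \in H^1\}$. For $\psi \in W(f)$, set $q := \psi I$. Then $qF = \psi f \in H^1$ and, $F$ being outer, Smirnov's theorem forces $q \in N^+\cap L^\infty = H^\infty$. The reality of $\psi$ translates into the symmetry $q = I^2 \bar q$ on $\T$. I claim $\dim_\R W(f)<\infty$ precisely when $I$ is a finite Blaschke product of some degree $m$, in which case $\dim_\R W(f) = 2m+1$, with explicit parameterization
\[
q(z) = \frac{p(z)}{\prod_{j=1}^m(1 - \bar a_j z)^2},\qquad p(z) = \sum_{l=0}^{2m} p_l z^l, \qquad p_{2m-l} = \overline{p_l}.
\]
For finite-Blaschke $I$, $\phi := q/I$ has real $L^\infty(\T)$ boundary values, and Schwarz reflection across $\T$ extends $\phi$ to a rational function on $\hat\C$ of degree $\leq 2m$ with poles in $\{a_j\}\cup\{1/\bar a_j\}$, yielding the above form. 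If instead $I$ has a nontrivial singular inner factor $S$ (write $I = SJ$ with $J$ inner), the family $q_t := J(S^t + S^{2-t})$ for $t\in(0,2)$ satisfies $q_t = I^2 \bar q_t$ on $\T$ (using $S^2 \bar S^t = S^{2-t}$ on $\T$), and the corresponding real $\psi_t := q_t\bar I = 2\,\mathrm{Re}(S^{t-1})|_\T$ are $\R$-linearly independent in $t$; thus $\dim_\R W(f) = \infty$. An analogous truncation argument handles infinite Blaschke $I$. Consequently, extremality of $f$ forces $I$ to be a finite Blaschke product.

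\emph{Step 2: The matrix $\mathfrak M$.} With $I$ now finite Blaschke of degree $m$, impose $\widehat{\psi f}(k_j) = 0$ on the $(2m+1)$-dimensional parameter space. Since $\psi f = qF = p\cdot F_0$ with $F_0$ as in \eqref{eqn:deffzero}, these read
\[
\sum_{l=0}^{2m} p_l\, C_{k_j - l} = 0, \qquad j = 1,\dots,M.
\]
Substituting $p_{2m-l} = \overline{p_l}$ pairs the $l$-th and $(2m-l)$-th summands for each $l < m$; passing to real variables (the real $p_m$ together with the real and imaginary parts of $p_0,\dots,p_{m-1}$) and splitting each complex equation into real and imaginary parts, the resulting $2M$ real linear equations in $2m+1$ real unknowns are recorded---via the sum and difference pairings \eqref{eqn:abplus}, \eqref{eqn:abminus}---precisely by the matrix $\mathfrak M$ of \eqref{eqn:blockmatrix}, up to an immaterial rescaling of the column for $p_m$ and a sign convention for the imaginary parts. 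The real line of constants $\psi\equiv c$ in $V(f)$ corresponds to $p(z) = c\prod_j(z-a_j)(1-\bar a_j z)$ and lies in the kernel of $\mathfrak M$. Hence extremality ($V(f) = \R$) is equivalent to this kernel being exactly one-dimensional, i.e., $\mathrm{rank}\,\mathfrak M = 2m$. Since $\mathfrak M$ has only $2M$ rows, this in turn forces $m \le M$, completing (a); condition (b) is the rank statement.

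The main obstacle is Step 1 in the non-finite-Blaschke case: rigorously verifying $\R$-linear independence of the $\psi_t$ (via the dependence of $2\cos((1-t)\cot(\theta/2))$ on $t$ as $t$ varies over $(0,2)$) and handling the infinite Blaschke situation. A cleaner alternative route invokes the Douglas--Shapiro--Shields pseudo-continuation theorem applied to $q/I^2$, directly forcing $I$ to extend meromorphically across $\T$ and hence to be a finite Blaschke product.
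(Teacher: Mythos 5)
Your Step 2 is, in substance, the paper's own argument (Section 4): parametrize the $G\in H^\infty$ with $G/I\in L^\infty_\R$ by $m$-symmetric polynomials $p$, write $\psi f=pF_0$, turn the conditions $\widehat{(pF_0)}(k_j)=0$ into the $2M\times(2m+1)$ real system encoded by $\mathfrak M$, and note that the constants account for exactly one kernel dimension, so extremality amounts to $\operatorname{rank}\mathfrak M=2m$. That part is sound, and your Schwarz-reflection derivation of the parametrization is a legitimate substitute for the paper's model-space argument (Lemma \ref{lem:genformreal}).

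The genuine gap is in Step 1 --- the necessity of condition (a) --- and it sits exactly where you flag it. Your route needs $\dim_\R W(f)=\infty$ whenever $I$ is not a finite Blaschke product. For a singular factor you offer $\psi_t=2\,\mathrm{Re}(S^{t-1})=2\cos((t-1)v)$, where $v$ is the a.e.\ finite boundary argument of $S$; but linear independence of finitely many of these is not automatic: a relation $\sum_kc_k\cos(s_kv)=\const$ a.e.\ only says that the almost periodic function $x\mapsto\sum_kc_k\cos(s_kx)$ is constant on the essential range of $v$, and since that range is merely a closed unbounded set (a priori possibly discrete), analyticity alone does not force all $c_k=0$. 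The infinite Blaschke case is dismissed with ``an analogous truncation argument,'' and the proposed fallback rests on a misconception: Douglas--Shapiro--Shields gives a pseudocontinuation to \emph{every} element of $K_{zI^2}$ for an \emph{arbitrary} inner $I$; it in no way forces $I$ to extend meromorphically across $\T$ or to be a finite Blaschke product. The paper sidesteps all of this (Section 3): by Frostman's theorem one passes from $I$ to a Blaschke product $\ph=(I-w)/(1-\ov wI)$ with at least $M+1$ zeros, splits off a subproduct $\ph_1$ of degree exactly $N=M+1$, and builds the $(2N+1)=(2M+3)$-dimensional family $G=g^2p\Phi\ph_2$ with $p$ an $N$-symmetric polynomial and $g=1-\ov wI$; imposing the $2M$ real spectral conditions leaves a kernel of dimension at least $3$, hence a nonconstant admissible perturbation --- no infinite-dimensionality claim is ever needed. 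If you prefer to stay within your framework, a clean repair is available: for every $w\in H^\infty$ with $I\ov w\in H^\infty$ (equivalently, $w\in K_{zI}\cap H^\infty$) the function $2\,\mathrm{Re}\,w$ lies in $W(f)$, the map $w\mapsto 2\,\mathrm{Re}\,w$ has kernel only the imaginary constants, and $K_{zI}\cap H^\infty$ is infinite-dimensional precisely when $I$ fails to be a finite Blaschke product.
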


\par This criterion should be compared with its counterpart from \cite[Section 2]{DAdv2021}, where a similar rank condition on the appropriate matrix emerged in the context of lacunary polynomials. We also mention that Theorem \ref{thm:extrpoipuhone} was previously announced in \cite{DCRM}; a proof sketch was supplied there as well.

\par Of course, the original de Leeuw--Rudin theorem for the non-punctured $H^1$ space is recovered from Theorem \ref{thm:extrpoipuhone} by taking $\mathcal K=\emptyset$, in which case $M=0$ and condition (b) becomes void. As further examples, we now consider two special cases where Theorem \ref{thm:extrpoipuhone} is easy to apply.

\begin{exmp} Let $F\in H^1_{\mathcal K}$ be an outer function with $\|F\|_1=1$. Obviously enough, $F$ is then an extreme point of $\text{\rm ball}(H^1_{\mathcal K})$. This fact is simply a consequence of the de Leeuw--Rudin theorem, coupled with the inclusion $H^1_{\mathcal K}\subset H^1$, but we can also deduce it from Theorem \ref{thm:extrpoipuhone}. Indeed, applying the latter result to $f=F$, we have $I=1$ and $m=0$; therefore, $F_0=F(\in H^1_{\mathcal K})$ and 
$$C_{k_j}=\widehat F_0(k_j)=0\qquad(j=1,\dots,M).$$
It follows that the blocks $\mathcal A^+$ and $\mathcal B^+$ in \eqref{eqn:blockmatrix} reduce to zero columns, of height $M$ each, while the other two blocks are absent. Thus, $\text{\rm rank}\,\mathfrak M=0(=2m)$, so that conditions (a) and (b) are fulfilled. 
\end{exmp}

\begin{exmp} Suppose that the set $\mathcal K$ contains precisely one element, a positive integer which we call $k$ rather than $k_1$. Thus $M=1$, $\mathcal K=\{k\}$, and the space in question is 
$$H^1_{\{k\}}:=\{f\in H^1:\,\widehat f(k)=0\},$$
the punctured Hardy space with a single spectral hole. Now let $f=IF$ be a unit-norm function in $H^1_{\{k\}}$; as before, it is assumed that $I$ is inner and $F$ outer. If $I$ is constant, then $f$ is outer and hence extreme in $\text{\rm ball}\left(H^1_{\{k\}}\right)$. To characterize the \lq\lq interesting" (i.e., non-outer) extreme points of $\text{\rm ball}\left(H^1_{\{k\}}\right)$, we invoke Theorem \ref{thm:extrpoipuhone}. Condition (a) shows that we should only study the case where $m=1$, so we assume that 
\begin{equation}\label{eqn:ieqia}
I(z)=I_a(z):=\f{z-a}{1-\ov az}
\end{equation}
for some $a\in\D$. We then consider the function $F_0(z):=F(z)/(1-\ov az)^2$ and its Fourier coefficients 
$$\widehat F_0(n)=:C_n=A(n)+iB(n),\qquad n\in\Z.$$
More explicitly, 
\begin{equation}\label{eqn:convol}
C_n=\sum_{j=0}^n(j+1)\,\ov a^j\widehat F(n-j),
\end{equation}
with the understanding that the sum is zero for $n<0$. The matrix $\mathfrak M$ takes the form 
$$\mathfrak M=
\begin{pmatrix}
A_{1,0}^+ & A_{1,1}^+ & B_{1,1}^-\\
B_{1,0}^+ & B_{1,1}^+ & -A_{1,1}^-
\end{pmatrix},$$
where the entries are given by \eqref{eqn:abplus} and \eqref{eqn:abminus}, with $k_1=k$. Finally, the criterion for $f=I_aF$ to be an extreme point of $\text{\rm ball}\left(H^1_{\{k\}}\right)$ is that 
\begin{equation}\label{eqn:rkmeqtwo}
\text{\rm rank}\,\mathfrak M=2,
\end{equation}
as Theorem \ref{thm:extrpoipuhone} tells us. 
\par Now, the identity 
$$f(z)=(z-a)(1-\ov az)F_0(z)$$
allows us to rewrite the assumption $\widehat f(k)=0$ as 
$$aC_k-(1+|a|^2)C_{k-1}+\ov aC_{k-2}=0.$$
This in turn implies, upon separating the real and imaginary parts, that the first column of $\mathfrak M$ is a linear combination of the other two. Consequently, \eqref{eqn:rkmeqtwo} holds if and only if the determinant 
$$\Delta:=
\begin{vmatrix}
A_{1,1}^+ & B_{1,1}^-\\
B_{1,1}^+ & -A_{1,1}^-
\end{vmatrix}$$
is nonzero. A calculation reveals that $\Delta=|C_{k-2}|^2-|C_k|^2$. Thus,  \eqref{eqn:rkmeqtwo} boils down to saying that $|C_{k-2}|\ne|C_k|$, or equivalently, 
\begin{equation}\label{eqn:charonehole}
\left|\sum_{j=0}^{k-2}(j+1)\,\ov a^j\widehat F(k-2-j)\right|
\ne\left|\sum_{j=0}^k(j+1)\,\ov a^j\widehat F(k-j)\right|,
\end{equation}
this last restatement being due to \eqref{eqn:convol}. 
\par In summary, a unit-norm function in $H^1_{\{k\}}$ is an extreme point of $\text{\rm ball}\left(H^1_{\{k\}}\right)$ if and only if it is either outer or has the form $I_aF$, where $a\in\D$, the inner factor $I_a$ is given by \eqref{eqn:ieqia}, and $F\in H^1$ is an outer function satisfying \eqref{eqn:charonehole}.
\end{exmp}

\par Before stating our second theorem, we need to recall yet another geometric concept. Given a Banach space $X=(X,\|\cdot\|)$ and a point $x\in\text{\rm ball}(X)$, one says that $x$ is an {\it exposed point} of the ball 
if there exists a functional $\phi\in X^*$ of norm $1$ such that
$$\{y\in\text{\rm ball}(X):\,\phi(y)=1\}=\{x\}.$$
It is easy to show that every exposed point is extreme.

\par The next result provides a simple sufficient condition for a function in $H^1_{\mathcal K}$ to be an exposed point of the unit ball therein.

\begin{thm}\label{thm:expopoipuhone} If $f$ is an extreme point of $\text{\rm ball}(H^1_{\mathcal K})$ and if $1/f\in L^1$, then $f$ is an exposed point of $\text{\rm ball}(H^1_{\mathcal K})$. 
\end{thm}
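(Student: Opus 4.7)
The plan is to construct an exposing functional for $f$ explicitly. Take
\[
\phi(g):=\f1{2\pi}\int_\T g\,\ov{f/|f|}\,|d\ze|,\qquad g\in H^1_{\mathcal K}.
\]
The hypothesis $1/f\in L^1$ guarantees $|f|>0$ a.e.\ on $\T$, so $\phi$ is well defined; H\"older gives $\|\phi\|\le1$, and $\phi(f)=\|f\|_1=1$ yields $\|\phi\|=1$. If $g\in\text{\rm ball}(H^1_{\mathcal K})$ attains $\phi(g)=1$, then the equality cases in $|\phi(g)|\le\|g\|_1\le1$ force $\|g\|_1=1$ and $g\,\ov{f/|f|}=|g|$ a.e., so $g=rf$ a.e.\ with $r:=|g|/|f|\ge0$. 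The remainder of the proof will be devoted to showing $r\equiv1$.

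My decisive intermediate step will be to establish that $r\in L^\infty(\T)$. Since $f$ is extreme, Theorem \ref{thm:extrpoipuhone}(a) supplies $f=IF$ where $I(z)=\prod_{j=1}^m\f{z-a_j}{1-\ov a_jz}$ is a finite Blaschke product of degree $m\le M$ and $F$ is outer. Setting $Q(z):=\prod_j(z-a_j)$ and $Q^*(z):=\prod_j(1-\ov a_jz)$, one has $I=Q/Q^*$ as rational functions. The function $P(z):=g(z)Q^*(z)/F(z)$ then lies in the Smirnov class $N^+$ on $\D$, and a direct computation on $\T$ using $g=rIF$ gives $P=rQ$ there. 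Its Schwarz-type reflection $P^*(z):=z^m\ov{P(1/\bar z)}$ is analytic on $\{|z|>1\}$ and satisfies $P^*=rQ^*$ on $\T$; consequently
\[
PQ^*=rQQ^*=P^*Q\qquad\text{a.e.\ on }\T.
\]
Reading off Taylor coefficients of $PQ^*$ in $\D$ places its distributional Fourier spectrum on $\T$ in $\Z_+$, while $P^*Q$ is analytic on $\{|z|>1\}$ with a pole of order at most $2m$ at $\infty$, confining its Fourier spectrum to $\{k\le2m\}$. Matching the two forces the common boundary function to be a polynomial $\Pi(\ze)$ of degree $\le2m$, whence
\[
G(z):=\f{g(z)}{f(z)}=\f{P(z)}{Q(z)}=\f{\Pi(z)}{Q(z)Q^*(z)}
\]
extends to a rational function on $\C\cup\{\infty\}$ of degree $\le2m$, with poles lying in $\{a_j\}\cup\{1/\ov a_j\}$---all off $\T$. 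Hence $r=G|_\T$ is bounded.

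With $r$ bounded, a routine perturbation finishes the proof. Put $u:=g-f=(r-1)f\in H^1_{\mathcal K}$ and pick $\eps>0$ small enough that $1\pm\eps(r-1)\ge0$ a.e.\ on $\T$, possible precisely because $r\in L^\infty$. Then
\[
\|f\pm\eps u\|_1=\f1{2\pi}\int_\T|f|\bigl(1\pm\eps(r-1)\bigr)\,|d\ze|=\|f\|_1\pm\eps\bigl(\|g\|_1-\|f\|_1\bigr)=1,
\]
so $f=\f12(f+\eps u)+\f12(f-\eps u)$ would be a nondegenerate midpoint if $u\ne0$, contradicting extremality of $f$. Therefore $u=0$, i.e.\ $g=f$. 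The main obstacle is the rationality step above---turning the a.e.\ identity $PQ^*=P^*Q$ into a polynomial identity---which leans crucially on Theorem \ref{thm:extrpoipuhone}(a): without the control it provides on the zeros of $f$, the quotient $G=g/f$ could carry infinitely many poles and need not be rational at all.
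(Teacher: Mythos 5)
Your overall architecture is sound and in fact runs parallel to the paper's: the exposing functional $\phi$ together with the reduction to \lq\lq$g=rf$ with $r\ge0$ measurable" is exactly the content of the paper's Lemma \ref{lem:charexpohone}, and your closing perturbation argument is the standard equivalence behind Lemma \ref{lem:charextrhone}. The genuine gap is the rationality step, which is precisely where the paper invests its Lemma \ref{lem:posquo}. Your $P=gQ^*/F$ lies in $N^+$ with pointwise boundary values $rQ$, and from $r|f|=|g|\in L^1$ and $1/|f|\in L^1$ one only gets $r\in L^{1/2}$ by Cauchy--Schwarz, hence $P\in H^{1/2}$ at best. For such functions the pointwise a.e.\ boundary function and the distributional boundary value are \emph{different} objects, so the a.e.\ identity $PQ^*=P^*Q$ on $\T$ cannot be upgraded to an equality of Fourier/Laurent spectra; \lq\lq matching the two" is unjustified. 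Tellingly, your argument never uses $r\ge0$ at this point, and without positivity the conclusion is false: $P(z)=i(1+z)/(1-z)$ belongs to $H^p$ for every $p<1$, its boundary values $-\cot(\th/2)$ are real, and with $m=0$, $Q=Q^*\equiv1$ it satisfies $PQ^*=P^*Q$ a.e.\ on $\T$, yet it is not a polynomial. (Its distributional boundary value from inside differs from the one from outside by a point mass at $\ze=1$, exactly where the non-locally-integrable pointwise limit blows up; that is why the two spectra need not match.)

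The repair is exactly Lemma \ref{lem:posquo}: on $\T$ one has $z^{-m}PQ^*=r\prod_{j}|z-a_j|^2\ge0$, and that lemma's factorization $PQ^*=bv^2$ with $b$ a Blaschke product and $v\in H^1$ converts the positivity into the identity $\ov v=bvz^{-m}$ between genuine $L^1$ functions, whose spectra can legitimately be compared; this shows $PQ^*$ is a polynomial of degree at most $2m$, after which your boundedness of $r=g/f$ on $\T$ and the concluding perturbation go through verbatim. So the proof closes once this one step is replaced by (or reduced to) Lemma \ref{lem:posquo}; as written, the spectral-matching argument alone does not bridge the gap.
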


\par There seems to be little hope for a complete---and reasonably explicit---description of the exposed points of $\text{\rm ball}(H^1_{\mathcal K})$, since even the classical case where $\mathcal K=\emptyset$ presents an open problem. In fact, the exposed points of $\text{\rm ball}(H^1)$ have been studied by a number of authors (see, e.g., \cite{Hel, Pol, Sar1, Sar2} and \cite[Section 3]{DJFA}, where various pieces of information were gathered), but no satisfactory characterization is currently available. Among the known facts we single out the following (see \cite{Sar1}): If $f$ is a unit-norm outer function in $H^1$ with $1/f\in L^1$, then $f$ is an exposed point of $\text{\rm ball}(H^1)$. It is this prototypical result that we now extend, by means of Theorem \ref{thm:expopoipuhone}, to the $H^1_{\mathcal K}$ setting. 

\par The plan for the rest of the paper is as follows. In Section 2 we collect some preliminary results, to be employed later. In Sections 3 and 4, we prove Theorem \ref{thm:extrpoipuhone}. This is done in two steps: first we establish the necessity of condition (a), and secondly, we show that among the functions satisfying (a), the extreme points are characterized by (b). Finally, Section 5 is devoted to proving Theorem \ref{thm:expopoipuhone}.

\section{Preliminaries}

Several lemmas will be needed. Before stating them, we list some of the function spaces that appear below and recall the appropriate definitions. 

\par Having already introduced the Hardy space $H^1$, we now define $H^p$ to be the intersection $H^1\cap L^p$ if $1<p\le\infty$, and the closure of $H^1$ in $L^p$ if $0<p<1$. The {\it Smirnov class} $N^+$ is the set of all ratios $\varphi/\psi$, where $\varphi$ ranges over $H^\infty$ and $\psi$ over the outer functions in $H^\infty$. (Equivalent---and more traditional---definitions of $H^p$ and $N^+$ can be found in \cite[Chapter II]{G}.) The functions in $H^1$ (resp., $L^1$) with finite spectrum will be referred to as polynomials (resp., trigonometric polynomials). Finally, we write $L^\infty_\R$ for the set of real-valued functions in $L^\infty$. 

\begin{lem}\label{lem:charextrhone} Let $X$ be a subspace of $H^1$. Suppose also that $f\in X$ is a function with $\|f\|_1=1$ whose canonical factorization is $f=IF$, with $I$ inner and $F$ outer. The following conditions are equivalent. 
\par{\rm (i.1)} $f$ is an extreme point of $\text{\rm ball}(X)$. 
\par{\rm (ii.1)} Whenever $h\in L^\infty_\R$ and $fh\in X$, we have $h=\const$ a.e. on $\T$.
\par{\rm (iii.1)} Whenever $G\in H^\infty$ is a function satisfying $G/I\in L^\infty_\R$ and $FG\in X$, we have $G=cI$ for some $c\in\R$. 
\end{lem}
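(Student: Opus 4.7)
My plan is to prove the three-way equivalence by circling through the implications $(\text{i.1})\Rightarrow(\text{ii.1})\Rightarrow(\text{i.1})$ and then establishing $(\text{ii.1})\Leftrightarrow(\text{iii.1})$ via the substitution $G=Ih$.

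For $(\text{i.1})\Rightarrow(\text{ii.1})$, suppose $h\in L^\infty_\R$ satisfies $fh\in X$. The idea is to center $h$ against the weight $|f|$: set $\alpha:=\tfrac1{2\pi}\int_\T|f|h\,|d\zeta|$ and $h_1:=h-\alpha$. For $\eps>0$ small enough that $\eps\|h_1\|_\infty<1$, the factor $1\pm\eps h_1$ is positive a.e.\ on $\T$, so $\|f(1\pm\eps h_1)\|_1=\tfrac1{2\pi}\int_\T|f|(1\pm\eps h_1)\,|d\zeta|=1$. Both $f(1\pm\eps h_1)$ belong to $X$ (since $f,fh\in X$), lie in $\text{ball}(X)$, and average to $f$; extremality forces them equal, hence $fh_1=0$ a.e., and since $f\ne 0$ a.e.\ on $\T$ we get $h\equiv\alpha$.

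For $(\text{ii.1})\Rightarrow(\text{i.1})$, take $f=(g_1+g_2)/2$ with $g_j\in\text{ball}(X)$ and write $g_j=f\pm k$. The chain $2=2\|f\|_1\le\|f+k\|_1+\|f-k\|_1\le 2$ collapses to pointwise equality $|f+k|+|f-k|=2|f|$ a.e., which (by the standard characterization of equality in the triangle inequality for the pair $f+k, f-k$) means $k/f$ is real with $|k/f|\le 1$ a.e. Since $f\ne 0$ a.e., $h:=k/f$ lies in $L^\infty_\R$ and $fh=k\in X$. By (ii.1) $h$ is a constant $c$; but then $\|(1\pm c)f\|_1\le 1$ forces $c=0$, so $k=0$ and $g_1=g_2=f$.

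For $(\text{ii.1})\Leftrightarrow(\text{iii.1})$, I would exhibit the bijection $h\leftrightarrow G=Ih$ between the respective sets of test functions. If $h\in L^\infty_\R$ and $fh\in X\subset H^1$, then $Ih=fh/F$; since $F$ is outer in $H^1$ and $fh\in H^1$, the quotient lies in the Smirnov class $N^+$, and because $|Ih|=|h|\in L^\infty$, the fact that $N^+\cap L^\infty=H^\infty$ forces $G:=Ih\in H^\infty$. Clearly $G/I=h\in L^\infty_\R$ and $FG=fh\in X$, so the triple satisfies the hypothesis of (iii.1), and the conclusions $h\equiv c$ and $G\equiv cI$ are identical. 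The reverse direction is immediate: given $G$ as in (iii.1), set $h:=G/I$ to get $fh=FG\in X$ with $h\in L^\infty_\R$.

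The main technical point, and the only step that is not purely elementary, is the identification $N^+\cap L^\infty=H^\infty$ (equivalently, the inner-outer cancellation $fh/F\in H^\infty$). This invokes the Smirnov-class machinery recalled in Section~2; everything else reduces to the triangle inequality on $\T$ and a perturbation-by-$(1\pm\eps h_1)$ argument, both of which are standard in the de Leeuw--Rudin circle of ideas.
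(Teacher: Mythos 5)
Your proof is correct. The substance of the paper's own argument is the equivalence (ii.1)$\iff$(iii.1), and there you do exactly what the paper does: pass between $h$ and $G=Ih=fh/F$, using that $F$ is outer to place $fh/F$ in $N^+$ and then $N^+\cap L^\infty=H^\infty$ to land in $H^\infty$. The difference is in (i.1)$\iff$(ii.1): the paper simply cites this from Gamelin's book (noting the proof there, given for $X=H^1$, works verbatim for any subspace), whereas you prove it from scratch --- the perturbation $f(1\pm\eps h_1)$ with $h_1$ centered against the weight $|f|$ for one direction, and the collapse of the triangle inequality $|f+k|+|f-k|\ge 2|f|$ to a pointwise equality (forcing $k/f$ real and bounded by $1$) for the other. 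Both of your subarguments are sound; the only facts used implicitly are that a non-null $H^1$ function is nonzero a.e.\ on $\T$ and that equality $|a|+|b|=|a+b|$ forces $a\ov b\ge0$, both standard. What your route buys is self-containedness; what the paper's buys is brevity. Either way the lemma is established.
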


\begin{proof} The equivalence between (i.1) and (ii.1) is well known (see, e.g., \cite [Chapter V, Section 9]{Gam} for the case of $X=H^1$). We nonetheless provide the details for the reader's convenience. 
\par Suppose that (ii.1) fails, meaning that there is a nonconstant function $h\in L^\infty_\R$ with $fh\in X$. We may assume in addition that $\int_\T|f|h=0$ and $\|h\|_\infty\le1$. (To achieve this, replace the original $h$ by $\al h+\be$ with suitable $\al,\be\in\R$ if necessary.) The functions $f_1:=f(1+h)$ and $f_2:=f(1-h)$ are then distinct unit-norm elements of $X$, and the identity $f=\f12(f_1+f_2)$ shows that $f$ is a non-extreme point of $\text{\rm ball}(X)$. This proves the implication (i.1)$\implies$(ii.1). 

\par Conversely, suppose that (i.1) fails. This tells us that there exists a non-null function $g\in X$ for which $\|f\pm g\|_1=1$. Putting $h:=g/f$, we have then 
\begin{equation}\label{eqn:babayaga}
\f1{2\pi}\int_\T|f(\ze)|\left\{|1+h(\ze)|+|1-h(\ze)|\right\}|d\ze|=2.
\end{equation}
Because $\|f\|_1=1$ and 
\begin{equation}\label{eqn:dedyaga}
|1+h(\ze)|+|1-h(\ze)|\ge2	
\end{equation}
wherever $h(\ze)$ is defined, \eqref{eqn:babayaga} is only possible if equality holds in \eqref{eqn:dedyaga} for almost all $\ze\in\T$. This in turn implies that $h$ takes values in the real interval $[-1,1]$; in particular, $h\in L^\infty_\R$. Also, $h$ must be nonconstant. (Otherwise, we would have $h\equiv c$ for some $c\in[-1,1]$ and the condition 
$$\|f+g\|_1=(1+c)\|f\|_1=1$$ 
would force $c$ to be $0$, meaning that $h\equiv0$ and hence $g\equiv0$.) Finally, since $fh=g\in X$, we see that (ii.1) is violated. The implication (ii.1)$\implies$(i.1) is thereby established. 

\par It remains to verify the equivalence of (ii.1) and (iii.1). Assuming that (ii.1) fails, we can find a nonconstant function $h\in L^\infty_\R$ for which the product $fh=:g$ is in $X$. Now put $G:=g/F$. Because $g$ and $F$ are both in $H^1$, while $F$ is outer, it follows that $G\in N^+$. Furthermore, we have
\begin{equation}\label{eqn:heqgovf}
h=\f gf=\f g{IF}=\f GI,
\end{equation}
whence $G=Ih\in L^\infty$; and since $N^+\cap L^\infty=H^\infty$ (see, e.g., \cite[Chapter II]{G}), we deduce that $G\in H^\infty$. 
Finally, in view of \eqref{eqn:heqgovf}, the assumptions 
\begin{equation}\label{eqn:condaaa}
h\in L^\infty_\R,\quad h\ne\const,\quad\text{\rm and}\quad g\in X
\end{equation}
take the form
\begin{equation}\label{eqn:condbbb}
G/I\in L^\infty_\R,\quad G/I\ne\const,\quad\text{\rm and}\quad FG\in X,
\end{equation}
meaning that condition (iii.1) fails. This proves that (iii.1) implies (ii.1). 
\par Conversely, if $G$ is an $H^\infty$ function making \eqref{eqn:condbbb} true, then \eqref{eqn:condaaa} holds with $h=G/I$ and $g=fh(=FG)$. Therefore, (ii.1) implies (iii.1). 
\end{proof}

\par\noindent{\it Remark.} The above lemma can be used to give a quick proof of the de Leeuw--Rudin theorem on the extreme points of $\text{\rm ball}(H^1)$. Indeed, for $X=H^1$, condition (iii.1) holds if and only if $I$ is constant (i.e., $f$ is outer). Here, the \lq\lq if" part is true because $H^\infty\cap L^\infty_\R$ contains only constants, while the converse is proved by taking $G=1+I^2$. 

\medskip Next, we establish an analogue of Lemma \ref{lem:charextrhone} for exposed points. 

\begin{lem}\label{lem:charexpohone} Under the assumptions of the preceding lemma, the following statements are equivalent. 
\par{\rm (i.2)} $f$ is an exposed point of $\text{\rm ball}(X)$. 
\par{\rm (ii.2)} Whenever $h$ is a nonnegative measurable function on $\T$ for which $fh\in X$, we have $h=\const$ a.e.
\par{\rm (iii.2)} Whenever $G\in N^+$ is a function satisfying $FG\in X$ and $G/I\ge0$ a.e. on $\T$, we have $G=cI$ for some constant $c\ge0$.
\end{lem}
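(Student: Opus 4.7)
The plan is to mimic the template of Lemma~\ref{lem:charextrhone}, with real-valued bounded functions replaced by nonnegative measurable ones and the class $H^\infty$ replaced by $N^+$. The equivalence (i.2)$\Leftrightarrow$(ii.2) is a classical characterization of exposed points in $H^1$ (see, e.g., \cite[Chapter V]{Gam} or \cite{Sar1}); as with (i.1)$\Leftrightarrow$(ii.1) in the previous lemma, the proof given there for $X=H^1$ carries over verbatim to any subspace $X\subset H^1$. In outline, an exposing functional $\phi\in X^*$ with $\phi(f)=1=\|f\|_1$ extends by Hahn--Banach to some $\Phi\in L^\infty$ of norm $1$, and the equality $\phi(f)=\|f\|_1$ forces $\Phi=f/|f|$ a.e.\ on $\{f\ne0\}$, which is a.e.\ on $\T$ since $f\not\equiv0$. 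A unit vector $y\in\text{\rm ball}(X)$ then satisfies $\phi(y)=1$ if and only if $y$ has the same argument as $f$ a.e., that is, if and only if $y=fh$ with $h\ge0$ a.e.; exposedness of $f$ therefore amounts to saying that the only nonnegative measurable $h$ with $fh\in X$ is a constant, which is condition (ii.2).

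For (ii.2)$\Leftrightarrow$(iii.2), I would argue in close parallel with Lemma~\ref{lem:charextrhone}. Assuming (ii.2) and picking $G\in N^+$ with $FG\in X$ and $G/I\ge0$ a.e., set $h:=G/I$; then $h$ is nonnegative and measurable on $\T$, and $fh=IF\cdot(G/I)=FG\in X$, so (ii.2) forces $h=c$ a.e.\ for some (necessarily nonnegative) constant $c$, giving $G=cI$. Conversely, assuming (iii.2) and taking $h\ge0$ measurable with $fh\in X$, put $G:=fh/F=Ih$; since $fh\in H^1$ and $F$ is outer, $G\in N^+$, while $G/I=h\ge0$ and $FG=fh\in X$, so (iii.2) yields $G=cI$ with $c\ge0$, whence $h=c$ a.e.

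The main obstacle is the equivalence (i.2)$\Leftrightarrow$(ii.2), which rests on the Hahn--Banach extension and on a careful analysis of the boundary phase $f/|f|$; by contrast, (ii.2)$\Leftrightarrow$(iii.2) is essentially formal, and the only structural change from the earlier lemma is that $N^+$ now plays the role previously reserved for $H^\infty$. This substitution is natural: having dropped the boundedness hypothesis on $h$, we can no longer upgrade $G$ from $N^+$ to $H^\infty$ via the identity $N^+\cap L^\infty=H^\infty$ that was used in the proof of Lemma~\ref{lem:charextrhone}, and the weaker membership $G\in N^+$ is precisely what the argument actually requires.
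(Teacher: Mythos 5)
Your proposal is correct and follows essentially the same route as the paper: the equivalence (i.2)$\Leftrightarrow$(ii.2) is treated as a known fact (the paper cites \cite{DMRL2000} and \cite{dLR} rather than re-deriving it), and (ii.2)$\Leftrightarrow$(iii.2) is obtained by the same substitution $h=G/I$, $G=fh/F$, using that a quotient of $H^1$ functions with outer denominator lies in $N^+$. The paper phrases this second equivalence contrapositively, but the argument is identical to yours.
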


\begin{proof} The equivalence between (i.2) and (ii.2) is a known fact (see, e.g., Lemma 1(B) in \cite{DMRL2000}). The underlying argument being short and simple, we reproduce it here for the sake of completeness. 
\par Suppose that $\phi\in X^*$ is a functional with $\|\phi\|=1$ and $\phi(f)=1$. An application of the Hahn--Banach theorem shows that $\phi$ has the form 
$$\phi(g)=\f1{2\pi}\int_\T u(\ze)g(\ze)\,|d\ze|,\qquad g\in X,$$
where $u=|f|/f$. It follows that, for a function $g\in\text{\rm ball}(X)$, the equality $\phi(g)=1$ occurs if and only if $\|g\|_1=1$ and 
\begin{equation}\label{eqn:argarg}
\f g{|g|}=\f f{|f|}\quad\text{\rm a.e. on }\T.
\end{equation}
Now, condition (i.2) can be expressed by saying that the only unit-norm function $g\in X$ satisfying \eqref{eqn:argarg} is $f$. This, in turn, is easily rephrased as (ii.2). 

\par To verify that (ii.2) is equivalent to (iii.2), we follow the pattern of the preceding proof; only minor adjustments are actually needed. 

\par Namely, if (ii.2) fails, then there is a nonconstant measurable function $h\ge0$ such that $fh=:g$ is in $X$. As before, we put $G:=g/F$. Since $g$ and $F$ are both in $H^1$, while $F$ is outer, we infer that $G\in N^+$. We also have identity \eqref{eqn:heqgovf} at our disposal. Consequently, the assumptions 
\begin{equation}\label{eqn:condccc}
h\ge0,\quad h\ne\const,\quad\text{\rm and}\quad g\in X
\end{equation}
take the form
\begin{equation}\label{eqn:condddd}
G/I\ge0,\quad G/I\ne\const,\quad\text{\rm and}\quad FG\in X,
\end{equation}
which means that condition (iii.2) fails. 
\par Conversely, if \eqref{eqn:condddd} holds for some $G\in N^+$, then \eqref{eqn:condccc} is fulfilled with $h=G/I$ and $g=fh(=FG)$. 
\end{proof}

\par Before proceeding with our next lemma, we need to introduce and discuss a concept that will be repeatedly used in what follows. 

\begin{defn} {\rm Given a nonnegative integer $N$ and a polynomial $p$, we say that $p$ is {\it $N$-symmetric} if}
\begin{equation}\label{eqn:symmnk}
\widehat p(N-k)=\ov{\widehat p(N+k)}
\end{equation}
{\rm for all }\,$k\in\Z$.	
\end{defn}

\par Equivalently, $p$ is {\it $N$-symmetric} if and only if the trigonometric polynomial $q:=z^{-N}p$ is real-valued on $\T$; indeed, \eqref{eqn:symmnk} tells us that $\widehat q(-k)=\ov{\widehat q(k)}$ for all $k\in\Z$. Also, it follows from \eqref{eqn:symmnk} that $\deg p\le2N$ and $\widehat p(N)\in\R$. Consequently, a polynomial $p$ is $N$-symmetric if and only if it is writable as 
\begin{equation}\label{eqn:symmpoly}
p(z)=\sum_{j=0}^{N-1}\ov\ga_{N-j}z^j+\sum_{j=N}^{2N}\ga_{j-N}z^j
\end{equation}
for some $\ga_0\in\R$ and $\ga_1,\dots,\ga_N\in\C$. Setting
$$\ga_0=2\al_0,\quad\ga_j=\al_j+i\be_j\quad(j=1,\dots,N),$$
where the $\al_j$'s and $\be_j$'s are real numbers, we may therefore identify the (generic) $N$-symmetric polynomial \eqref{eqn:symmpoly} with the vector 
$$(\al,\be):=(\al_0,\al_1,\dots,\al_N,\be_1,\dots,\be_N)$$
from $\R^{2N+1}$, to be called the {\it coefficient vector} of $p$. 

\begin{lem}\label{lem:genformreal} Given an integer $N\ge0$ and points $a_1,\dots,a_N\in\D$, let 
\begin{equation}\label{eqn:finblaprodaj}
B(z):=\prod_{j=1}^N\f{z-a_j}{1-\ov a_jz}.
\end{equation}
The general form of a function $\psi\in H^\infty$ with $\psi/B\in L^\infty_\R$ is then 
\begin{equation}\label{eqn:genformpsi}
\psi(z)=p(z)\prod_{j=1}^N(1-\ov a_jz)^{-2},
\end{equation}
where $p$ is an $N$-symmetric polynomial. 
\end{lem}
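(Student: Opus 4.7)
My approach is to clear denominators and reduce the condition on $\psi$ to a Fourier-symmetry condition on a single $H^\infty$ function, which will then force that function to be a polynomial. Write $B=N/D$, where
$$N(z):=\prod_{j=1}^N(z-a_j),\qquad D(z):=\prod_{j=1}^N(1-\ov a_jz),$$
and record the ``reciprocal polynomial'' identity $D(\ze)=\ze^N\,\ov{N(\ze)}$, valid for $\ze\in\T$; equivalently, $N(\ze)=\ze^N\,\ov{D(\ze)}$ there. Since $|B|=1$ on $\T$, the assumption $\psi/B\in L^\infty_\R$ means $\psi=B^2\ov\psi$ there, i.e.
$$D(\ze)^2\psi(\ze)=N(\ze)^2\,\ov{\psi(\ze)}\qquad(\ze\in\T).$$

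Next, I set $\Psi:=D^2\psi$. Because $D$ is a polynomial with no zeros in $\ov\D$, the function $\Psi$ belongs to $H^\infty$. Substituting $N(\ze)=\ze^N\,\ov{D(\ze)}$ into the display above yields
$$\Psi(\ze)=\ze^{2N}\,\ov{\Psi(\ze)}\qquad\text{on }\T,$$
which on the Fourier side reads $\widehat\Psi(k)=\ov{\widehat\Psi(2N-k)}$ for every $k\in\Z$. Since $\Psi\in H^1$ gives $\widehat\Psi(k)=0$ for $k<0$, the symmetry forces $\widehat\Psi(m)=0$ for $m>2N$ as well. Hence $\Psi$ is a polynomial of degree at most $2N$ satisfying precisely the $N$-symmetry relation from the definition; calling it $p$ gives $\psi=p/D^2$, i.e.\ the asserted formula \eqref{eqn:genformpsi}.

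For the converse, if $p$ is any $N$-symmetric polynomial and $\psi:=p/D^2$, then $\psi\in H^\infty$ (since $D$ has no zeros in $\ov\D$). Using the equivalent characterization that $\ze^{-N}p(\ze)$ is real on $\T$, together with the identity $D(\ze)N(\ze)=\ze^N|D(\ze)|^2$ there, one computes
$$\f{\psi(\ze)}{B(\ze)}=\f{p(\ze)}{D(\ze)N(\ze)}=\f{\ze^{-N}p(\ze)}{|D(\ze)|^2},$$
which is real-valued and bounded on $\T$ (bounded because $|D|$ is bounded below there). The whole argument is really careful bookkeeping; the one step deserving attention is the passage from the pointwise identity $\Psi(\ze)=\ze^{2N}\ov{\Psi(\ze)}$ on $\T$ to the conclusion that $\Psi$ is a polynomial of degree $\le 2N$, which rests on combining the symmetry of Fourier coefficients with the one-sided vanishing $\widehat\Psi(k)=0$ for $k<0$ forced by $\Psi\in H^1$.
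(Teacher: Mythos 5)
Your proof is correct, but it takes a genuinely different and more elementary route than the paper's. The paper deduces from the pointwise identity $\psi\ov B^2=\ov\psi$ that $\psi$ is orthogonal in $H^2$ to $zB^2H^2$, hence lies in the model space $K_\th$ with $\th=zB^2$, and then quotes the known rational description of $K_\th$ for a finite Blaschke product of degree $2N+1$ to get $\psi=p\Phi$ with $\deg p\le 2N$; the $N$-symmetry of $p$ is extracted in a second step via the identity $B/\Phi=z^NS$ on $\T$, where $S(z)=\prod_j|z-a_j|^2$. You instead clear denominators directly: multiplying by the square of the denominator polynomial converts the reality condition into $\Psi(\ze)=\ze^{2N}\ov{\Psi(\ze)}$ on $\T$, and combining the resulting Fourier symmetry $\widehat\Psi(k)=\ov{\widehat\Psi(2N-k)}$ with the one-sided vanishing forced by analyticity yields, in a single stroke, both that $\Psi$ is a polynomial of degree at most $2N$ and that it is $N$-symmetric. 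Your version is self-contained (no appeal to the Douglas--Shapiro--Shields description of backward-shift-invariant subspaces), while the paper's fits into machinery it has already set up for other purposes; the converse direction is essentially identical in both. One cosmetic remark: your justification that $\Psi=D^2\psi\in H^\infty$ needs only that $D^2$ and $\psi$ are both in $H^\infty$ (the absence of zeros of $D$ in the closed disk is what you need for the converse, not here), and reusing the letter $N$ for the numerator polynomial collides with the integer $N$ of the statement, so a different symbol would be safer.
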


\par To keep on the safe side, we specify that the points $a_j$ above are not supposed to be pairwise distinct. Also, if $N=0$, then there are no $a_j$'s and the products in \eqref{eqn:finblaprodaj} and \eqref{eqn:genformpsi} are taken to be $1$, while $p$ reduces to a real constant. 

\par In the proof below, we use the notation 
\begin{equation}\label{eqn:modspa}
K_\th:=H^2\ominus\th H^2
\end{equation}
for the {\it star-invariant} (or {\it model}) subspace in $H^2$ generated by an inner function $\th$. It is well known (see \cite{DSS, N}) that \eqref{eqn:modspa}, with $\th$ inner, actually provides the general form of an invariant subspace for the backward shift operator 
$$f\mapsto\f{f-f(0)}z$$
in $H^2$. 
\par The following (fairly simple) fact can also be found in either \cite{DSS} or \cite{N}: If $\th$ is a finite Blaschke product, then $K_\th$ is formed by the rational functions $r$ whose poles (counted with multiplicities) are contained among those of $\th$ and which satisfy $\lim_{z\to\infty}r(z)/\th(z)=0$. In other words, if $\th$ is a finite Blaschke product of degree $n$, with zeros $\la_1,\dots,\la_n$ ($\in\D$), then $K_\th$ is the set of functions of the form 
$$z\mapsto p(z)\prod_{j=1}^n(1-\ov\la_jz)^{-1},$$
where $p$ is a polynomial with $\deg p\le n-1$. 

\medskip\noindent{\it Proof of Lemma \ref{lem:genformreal}.} If $\psi\in H^\infty$ with $\psi/B\in L^\infty_\R$, then 
$$\psi/B=\ov\psi/\ov B\quad\text{\rm a.e. on }\T,$$
or equivalently, $\psi\ov B^2=\ov\psi$. It follows that $\psi$ is orthogonal (in $H^2$) to the shift-invariant subspace $zB^2H^2$, and so $\psi\in K_\th$ with $\th=zB^2$. This $\th$ being a finite Blaschke product with $\deg\th=2N+1$, we know from the above discussion that $\psi$ is writable as
\begin{equation}\label{eqn:psipphi}
\psi(z)=p(z)\Phi(z),
\end{equation}
where 
\begin{equation}\label{eqn:defbigphi}
\Phi(z):=\prod_{j=1}^N(1-\ov a_jz)^{-2}
\end{equation}
and $p$ is a polynomial of degree at most $2N$. 

\par Further, we put 
\begin{equation}\label{eqn:defsss}
S(z):=\prod_{j=1}^N|z-a_j|^2
\end{equation}
and note that
\begin{equation}\label{eqn:bphisident}
\f{B(z)}{\Phi(z)}=z^NS(z),\qquad z\in\T,
\end{equation}
as verified by a straightforward calculation. Combining \eqref{eqn:psipphi} and \eqref{eqn:bphisident}, we see that
\begin{equation}\label{eqn:spsioverb}
S\psi/B=z^{-N}p\quad\text{\rm on }\T.
\end{equation}
Now, because the functions $S$ and $\psi/B$ are real-valued on $\T$, the same is true of the product $z^{-N}p$, and this means that $p$ is $N$-symmetric. The desired representation \eqref{eqn:genformpsi} is therefore provided by \eqref{eqn:psipphi}. 

\par Conversely, if $p$ is an $N$-symmetric polynomial and if $\psi$ is given by \eqref{eqn:psipphi}, then $\psi\in H^\infty$ and $\psi/B$ is real-valued (so that $\psi/B\in L^\infty_\R$) by virtue of \eqref{eqn:spsioverb}. The lemma is now proved. \qed

\begin{lem}\label{lem:posquo} Given an integer $N\ge0$ and points $a_1,\dots,a_N\in\D$, let $B$ be defined by \eqref{eqn:finblaprodaj}. Suppose also that $\psi\in H^{1/2}$ and $\psi/B\ge0$ a.e. on $\T$. Then $\psi$ has the form \eqref{eqn:genformpsi} for some $N$-symmetric polynomial $p$.
\end{lem}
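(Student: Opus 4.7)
The plan is to adapt the proof of Lemma \ref{lem:genformreal} to the weaker integrability hypothesis $\psi \in H^{1/2}$, using the nonnegativity of $\psi/B$ (rather than mere real-valuedness) to compensate. The case $\psi \equiv 0$ being trivial, I assume $\psi \not\equiv 0$, so that $\log|\psi| \in L^1(\T)$. The key first move is to take a \emph{square root} of the outer factor: let $E$ be the outer function with $|E|^2 = |\psi|$ a.e.\ on $\T$. Then $E \in H^1$ (because $|E| = |\psi|^{1/2} \in L^1$), and $E^2$ is outer with $|E^2| = |\psi|$, so $E^2$ agrees with the outer part of $\psi$ up to a unimodular constant. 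Absorbing this constant into the inner factor, I may write $\psi = IE^2$ with $I$ inner and $E \in H^1$ outer.

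Next, I translate the hypothesis $\psi/B \ge 0$ into a pointwise functional equation. Since $\psi/B = |\psi| = E\bar E$ a.e.\ on $\T$, one has $IE^2/B = E\bar E$ there; dividing by the a.e.\ nonvanishing $E$ yields
\[
IE = B\bar E \quad \text{a.e.\ on } \T.
\]
Set $Q(z) := \prod_{j=1}^N(z-a_j)$ and $\tilde Q(z) := \prod_{j=1}^N(1-\bar a_j z)$, so $B = Q/\tilde Q$ and $\Phi = 1/\tilde Q^2$. Multiplying the preceding identity by $\tilde Q$ gives
\[
IE\tilde Q = Q\bar E \quad \text{a.e.\ on } \T,
\]
with both sides now in $L^1(\T)$. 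The left-hand side lies in $H^1$ and hence has Fourier spectrum in $\{\ell \ge 0\}$, whereas the right-hand side, being a polynomial of degree $N$ times the antianalytic $L^1$ function $\bar E$, has spectrum in $\{\ell \le N\}$. Therefore $IE\tilde Q$ must equal a polynomial $T$ of degree at most $N$, and $IE = T/\tilde Q$.

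The factors can now be read off. Decompose $T = B_T T_o$ as a polynomial, where $B_T$ is the finite Blaschke product built from the zeros $w_k$ of $T$ inside $\D$ and $T_o$ is the polynomial obtained by replacing each such $w_k$ by its reflection $1/\bar w_k$. Then $B_T$ is inner and $T_o/\tilde Q$ is outer (a rational function holomorphic and nonvanishing on $\D$). By uniqueness of the inner-outer factorization in $H^1$ applied to $IE = T/\tilde Q = B_T \cdot (T_o/\tilde Q)$, I conclude that $I = B_T$ and $E = T_o/\tilde Q$ up to a unimodular constant. Using $B_T T_o^2 = T \cdot T_o$, this gives
\[
\psi = IE^2 = \frac{B_T T_o^2}{\tilde Q^2} = \frac{T\,T_o}{\tilde Q^2} = p\,\Phi,
\]
where $p := T\,T_o$ (up to a unimodular constant) is a polynomial of degree at most $2N$. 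The $N$-symmetry of $p$ then follows exactly as in the final step of Lemma \ref{lem:genformreal}'s proof: on $\T$ the identity $B/\Phi = z^N S$ gives $z^{-N} p = S\,\psi/B$, which is real-valued (indeed nonnegative) since both $S$ and $\psi/B$ are.

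I expect the main obstacle to be the Fourier-coefficient step, which demands that both sides of the functional equation sit in $L^1(\T)$. The naive identity $\psi/B = |\psi|$ lives only in $L^{1/2}(\T)$, where no usable Fourier theory is available; only the ``half-power'' version $IE = B\bar E$, after multiplication by $\tilde Q$, passes into $L^1$. This is the precise point at which the nonnegativity hypothesis---as opposed to mere real-valuedness---does essential work.
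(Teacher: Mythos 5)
Your proof is correct and is essentially the paper's own argument in different packaging: both take a square root of the outer part so as to land in $H^1$ (your $\psi=IE^2$ versus the paper's $u=\psi/\Phi=bv^2$), turn the positivity of $\psi/B$ into an identity equating an analytic $L^1$ function with an antianalytic one whose spectrum is bounded above by $N$ (with $v=E\widetilde Q$ and $b=I$, your $IE\widetilde Q=Q\ov E$ is literally the paper's $\ov v=bvz^{-N}$), and then compare Fourier spectra to force polynomials of degree at most $N$. The only divergence is at the very end, where the paper simply multiplies $v\cdot(bv)=u$, while you take an unnecessary detour through the inner--outer decomposition of $T$ (which also has a tiny wrinkle when $T$ vanishes at the origin); this costs nothing but could be skipped.
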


\begin{proof} Once again, we define the functions $\Phi$ and $S$ by \eqref{eqn:defbigphi} and \eqref{eqn:defsss}. We also put $u:=\psi/\Phi$ and note that $u\in H^{1/2}$. The rest of the proof will consist in showing that $u$ is an $N$-symmetric polynomial. Once this is done, the desired representation \eqref{eqn:genformpsi} comes out readily; to arrive at it, we simply write $\psi=u\Phi$ and set $p:=u$. The lemma will thereby be established. 

\par We begin by recalling identity \eqref{eqn:bphisident}, which yields
\begin{equation}\label{eqn:znuznu}
z^{-N}u=S\Phi u/B=S\psi/B
\end{equation}
a.e. on $\T$. Since $S$ and $\psi/B$ are both nonnegative, the same is true of their product, and \eqref{eqn:znuznu} tells us that 
\begin{equation}\label{eqn:znupos}
z^{-N}u\ge0\quad\text{\rm a.e. on }\T.
\end{equation}
Now, a standard factoring technique (see \cite[Chapter II]{G}) allows us to write the function $u(\in H^{1/2})$ in the form $u=bv^2$, where $b$ is a Blaschke product and $v\in H^1$. In particular, since $|b|=1$, we have 
\begin{equation}\label{eqn:moduone}
|u|=|v|^2=v\ov v
\end{equation}
(here and below, everything is assumed to hold a.e. on $\T$). On the other hand, \eqref{eqn:znupos} gives 
\begin{equation}\label{eqn:modutwo}
|u|=uz^{-N}=bv^2z^{-N}.
\end{equation}
A juxtaposition of \eqref{eqn:moduone} and \eqref{eqn:modutwo} reveals that $v\ov v=bv^2z^{-N}$, or equivalently, 
\begin{equation}\label{eqn:antianalytic}
\ov v=bvz^{-N}.
\end{equation}
Because the functions $v$ and $bv$ are both in $H^1$, their spectra are contained in $[0,\infty)$. It follows that 
\begin{equation}\label{eqn:twospectravbv}
\text{\rm spec}\,\ov v\subset(-\infty,0]\quad\text{\rm and}\quad
\text{\rm spec}\left(bvz^{-N}\right)\subset[-N,\infty).
\end{equation}
At the same time, \eqref{eqn:antianalytic} shows that the two spectra in \eqref{eqn:twospectravbv} are actually equal, so they are both contained in $[-N,0]$. This in turn implies that 
$$\text{\rm spec}\,v\subset[0,N]\quad\text{\rm and}\quad
\text{\rm spec}(bv)\subset[0,N].$$
In other words, $v$ and $bv$ are polynomials, of degree at most $N$ each. 
Consequently, their product (which is $u$) is a polynomial of degree at most $2N$. Moreover, $u$ is an {\it $N$-symmetric} polynomial, because $z^{-N}u$ is real-valued by virtue of \eqref{eqn:znupos}. The proof is now complete.
\end{proof}

\section{Proof of Theorem \ref{thm:extrpoipuhone}: Step 1}

This step consists in proving the necessity of condition (a) in Theorem \ref{thm:extrpoipuhone}. Thus, we want to show that a unit-norm function $f(=IF)\in H^1_{\mathcal K}$ will be a non-extreme point of the unit ball whenever it violates (a). 

\par Assume that condition (a) fails, so that $I$ does not reduce to a finite Blaschke product of degree at most $M$. This means that $I$ is divisible either by a (finite or infinite) Blaschke product with at least $M+1$ zeros, or by a nontrivial singular inner function. In either case, Frostman's theorem (see \cite[Chapter II]{G}) tells us that there exists a point $w\in\D$ for which 
\begin{equation}\label{eqn:frostshift}
\ph:=\f{I-w}{1-\ov wI}
\end{equation}
is a Blaschke product. Moreover, our current assumption on $I$ ensures that $\ph$ has at least $M+1$ zeros. (Otherwise, if $\ph$ were a finite Blaschke product of degree $d$, with $d\le M$, then so would be $I$. Indeed, the identity $I=(\ph+w)/(1+\ov w\ph)$ would then imply that $I$ is analytic on $\T$, so $I$ would have to be a finite Blaschke product. Since $|\ph(\ze)|=1>|w|$ for $\ze\in\T$, an application of Rouch\'e's theorem would furthermore show that $\ph+w$ has precisely $d$ zeros in $\D$, and the same would be true for $I$.)
\par Consequently, we can find a factorization 
\begin{equation}\label{eqn:phiphiphi}
\ph=\ph_1\ph_2, 
\end{equation}
where both factors on the right are Blaschke products (hence subproducts of $\ph$) and $\ph_1$ has precisely $M+1$ zeros. Setting $N:=M+1$, we may thus write 
\begin{equation}\label{eqn:finblaprodphione}
\ph_1(z)=\prod_{j=1}^N\f{z-a_j}{1-\ov a_jz}
\end{equation}
with the appropriate $a_1,\dots,a_N\in\D$. Next, we define the function $g\in H^\infty$ by the formula $g:=1-\ov wI$ and infer from \eqref{eqn:frostshift} that 
\begin{equation}\label{eqn:quoquo}
I/\ph=g/\ov g
\end{equation}
a.e. on $\T$. Finally, we combine \eqref{eqn:phiphiphi} and \eqref{eqn:quoquo} to get 
\begin{equation}\label{eqn:ieqprod}
I=\ph_1\ph_2g/\ov g.
\end{equation}

\par Our plan is to prove that $f$ is a non-extreme point of $\text{\rm ball}(H^1_{\mathcal K})$ by verifying that it violates condition (iii.1) of Lemma \ref{lem:charextrhone}, with $X=H^1_{\mathcal K}$. Thus, we need to produce a function $G\in H^\infty$, other than a constant multiple of $I$, with the properties that 
\begin{equation}\label{eqn:firpro}
G/I\in L^\infty_\R
\end{equation}
and 
\begin{equation}\label{eqn:secpro}
FG\in H^1_{\mathcal K}.
\end{equation}
It turns out that such a $G$ can be constructed in the form 
\begin{equation}\label{eqn:intheform}
G=g^2p\Phi\ph_2,
\end{equation}
where 
$$\Phi(z):=\prod_{j=1}^N(1-\ov a_jz)^{-2}$$
and $p$ is an $N$-symmetric polynomial; this claim will be justified below.

\par First of all, \eqref{eqn:intheform} actually defines an $H^\infty$ function, since each of the factors on the right-hand side is in $H^\infty$. Furthermore, any such $G$ will satisfy \eqref{eqn:firpro}. Indeed, we may combine \eqref{eqn:intheform} and \eqref{eqn:ieqprod} to find that 
\begin{equation}\label{eqn:capgovercapi}
G/I=G\ov I=g^2p\Phi\ph_2\cdot\ov\ph_1\ov\ph_2\ov g/g=|g|^2p\Phi\ov\ph_1
\end{equation}
a.e. on $\T$. An application of Lemma \ref{lem:genformreal} with $B=\ph_1$ now yields
$$p\Phi\ov\ph_1\left(=p\Phi/\ph_1\right)\in L^\infty_\R.$$
The product $|g|^2p\Phi\ov\ph_1$ is therefore also in $L^\infty_\R$, and \eqref{eqn:firpro} is readily implied by \eqref{eqn:capgovercapi}. 

\par So far, everything was valid for an arbitrary $N$-symmetric polynomial $p$. Now, we shall see that the appropriate choice of $p$ in \eqref{eqn:intheform} will ensure \eqref{eqn:secpro}, along with the condition
\begin{equation}\label{eqn:gineconst}
G/I\ne\const.
\end{equation}
Multiplying \eqref{eqn:intheform} by $F$ gives 
\begin{equation}\label{eqn:fgfzerop}
FG=F_0p,
\end{equation}
where 
$$F_0:=Fg^2\Phi\ph_2\,(\in H^1).$$
For \eqref{eqn:secpro} to hold, it is necessary and sufficient that 
$$\widehat{(FG)}(k_j)=0\quad\text{\rm for }j=1,\dots,M.$$
Equivalently, in view of \eqref{eqn:fgfzerop}, the numbers 
\begin{equation}\label{eqn:cjfourier}
\de_j:=\widehat{(F_0p)}(k_j),\qquad j=1,\dots,M,
\end{equation}
must be null. 

\par On the other hand, we know from the previous section that there is a natural isomorphism between the space of $N$-symmetric polynomials and $\R^{2N+1}$. Namely, the general form of an $N$-symmetric polynomial $p$ is given by 
$$p(z)=p_{(\al,\be)}(z):=
\sum_{l=0}^{N-1}\left(\al_{N-l}-i\be_{N-l}\right)z^l+2\al_0z^N+
\sum_{l=N+1}^{2N}\left(\al_{l-N}+i\be_{l-N}\right)z^l,$$
where 
\begin{equation}\label{eqn:albevect}
(\al,\be):=\left(\al_0,\al_1,\dots,\al_N,\be_1,\dots,\be_N\right)\in\R^{2N+1}.
\end{equation}
With this in mind, we begin by taking an arbitrary vector \eqref{eqn:albevect} and then define, for $1\le j\le M$, the numbers $\de_j(\al,\be)$ as in \eqref{eqn:cjfourier}, but with $p=p_{(\al,\be)}$. That is, 
$$\de_j(\al,\be):=\widehat{(F_0p_{(\al,\be)})}(k_j),\qquad j=1,\dots,M.$$
Finally, we consider the $\R$-linear operator $T:\R^{2N+1}\to\R^{2M}$ that acts by the rule 
$$T(\al,\be)=\left(\text{\rm Re}\,\de_1(\al,\be),\,\text{\rm Im}\,\de_1(\al,\be),\dots,
\text{\rm Re}\,\de_M(\al,\be),\,\text{\rm Im}\,\de_M(\al,\be)\right).$$

\par It is the dimension of the subspace $\mathfrak N_T:=\text{\rm ker}\,T$, the kernel of $T$ in $\R^{2N+1}$, that interests us here. The rank-nullity theorem (see, e.g., \cite[p.\,63]{Axl}) yields
$$\text{\rm rank}\,T+\text{\rm dim}\,\mathfrak N_T=2N+1,$$
and we combine this with the obvious inequality
$$\text{\rm rank}\,T\le2M=2N-2$$
to deduce that 
$$\text{\rm dim}\,\mathfrak N_T\ge3.$$
In particular, we can find two linearly independent vectors, say $(\al^{(1)},\be^{(1)})$ and $(\al^{(2)},\be^{(2)})$, in $\mathfrak N_T$. The corresponding $N$-symmetric polynomials, which we denote for simplicity by $p_1$ and $p_2$, are then linearly independent as well. Consequently, at least one of them (let it be $p_1$) is not a constant multiple of $I/(g^2\Phi\ph_2)$, whence
$$g^2p_1\Phi\ph_2/I\ne\const.$$
Also, because the coefficient vector $(\al^{(1)},\be^{(1)})$ of $p_1$ is in $\mathfrak N_T$, the numbers \eqref{eqn:cjfourier} are null for $p=p_1$. This choice of $p$ therefore guarantees that the product on either side of \eqref{eqn:fgfzerop} belongs to $H^1_{\mathcal K}$. 

\par In summary, setting $p=p_1$ in \eqref{eqn:intheform} we arrive at a function $G\in H^\infty$ that satisfies \eqref{eqn:firpro}, \eqref{eqn:secpro} and \eqref{eqn:gineconst}. We conclude that condition (iii.1) of Lemma \ref{lem:charextrhone} breaks down for $X=H^1_{\mathcal K}$, and so $f$ fails to be an extreme point of $\text{\rm ball}(H^1_{\mathcal K})$.

\section{Proof of Theorem \ref{thm:extrpoipuhone}: Step 2}

This second step consists in characterizing the extreme points of $\text{\rm ball}(H^1_{\mathcal K})$ among those unit-norm functions $f(=IF)$ in $H^1_{\mathcal K}$ which obey condition (a). Thus, the inner factor $I$ of $f$ is now assumed to be of the form 
$$I(z)=\prod_{j=1}^m\f{z-a_j}{1-\ov a_jz},$$
where $0\le m\le M$ and $a_1,\dots,a_m\in\D$. 
\par In view of the equivalence relation (i.1)$\iff$(iii.1) from Lemma \ref{lem:charextrhone}, our purpose is to find out whether there exists a function $G\in H^\infty$ (other than a constant multiple of $I$) such that 
\begin{equation}\label{eqn:govireal}
G/I\in L^\infty_\R
\end{equation}
and 
\begin{equation}\label{eqn:prodfghonek}
FG\in H^1_{\mathcal K}.
\end{equation}
From Lemma \ref{lem:genformreal} we know that the functions $G\in H^\infty$ satisfying \eqref{eqn:govireal} are precisely those of the form 
\begin{equation}\label{eqn:geqpphi}
G=p\Phi_0,
\end{equation}
where $p$ is an $m$-symmetric polynomial and 
$$\Phi_0(z):=\prod_{j=1}^m(1-\ov a_jz)^{-2}.$$
We further need to determine which choices of $p$ ensure \eqref{eqn:prodfghonek}. Assuming \eqref{eqn:geqpphi}, we put $F_0:=F\Phi_0(\in H^1)$ and rewrite condition \eqref{eqn:prodfghonek} as $pF_0\in H^1_{\mathcal K}$, which in turn boils down to 
\begin{equation}\label{eqn:bober}
\widehat{(pF_0)}(k_j)=0\quad\text{\rm for}\quad j=1,\dots,M.
\end{equation}
(It should be noted that our current $F_0$ agrees with its namesake from Section 1.)
\par Next, we want to recast equations \eqref{eqn:bober} in terms of the coefficient vector of $p$. To this end, we first write $p$ in the form \eqref{eqn:symmpoly} (with $m$ in place of $N$), which gives 
\begin{equation}\label{eqn:symmpolybis}
p(z)=\sum_{l=0}^{m-1}\ov\ga_{m-l}z^l+\sum_{l=m}^{2m}\ga_{l-m}z^l
\end{equation}
for some $\ga_0\in\R$ and $\ga_1,\dots,\ga_m\in\C$. Using the notation 
$$C_r:=\widehat F_0(r),\qquad r\in\Z$$
(in accordance with \eqref{eqn:defofck}), we find then, for any fixed $k\in\Z$, that
\begin{equation}\label{eqn:pfhatk}
\begin{aligned}
\widehat{(pF_0)}(k)&=\sum_{l=0}^{2m}\widehat F_0(k-l)\widehat p(l)
=\sum_{l=0}^{m-1}C_{k-l}\,\ov\ga_{m-l}
+\sum_{l=m}^{2m}C_{k-l}\,\ga_{l-m}\\
&=\sum_{l=1}^{m}C_{k+l-m}\,\ov\ga_l+\sum_{l=0}^{m}C_{k-l-m}\,\ga_l.
\end{aligned}
\end{equation}
Therefore, equations \eqref{eqn:bober} take the form
\begin{equation}\label{eqn:dvabobra}
\sum_{l=0}^{m}C_{k_j-l-m}\,\ga_l+\sum_{l=1}^{m}C_{k_j+l-m}\,\ov\ga_l=0
\qquad(j=1,\dots,M).
\end{equation}
We now write
\begin{equation}\label{eqn:creqarbr}
C_r=A(r)+iB(r)\quad\text{\rm for}\quad r\in\Z
\end{equation}
(in accordance with \eqref{eqn:defakbk}) and decompose the $\ga_l$'s similarly. Precisely speaking, we put
\begin{equation}\label{eqn:galbebis}
\ga_0=2\al_0,\quad\ga_l=\al_l+i\be_l\quad\text{\rm for}\quad l=1,\dots,m,
\end{equation}
where the $\al_l$'s and $\be_l$'s are real. Finally, we plug \eqref{eqn:creqarbr} and \eqref{eqn:galbebis} into \eqref{eqn:dvabobra} to obtain, after separating the real and imaginary parts, a system of $2M$ real equations. Namely, these are 
\begin{equation}\label{eqn:reparteqbis}
\sum_{l=0}^mA^+_{j,l}\,\al_l+\sum_{l=1}^mB^-_{j,l}\,\be_l=0\qquad(j=1,\dots,M)
\end{equation}
and 
\begin{equation}\label{eqn:imparteqbis}
\sum_{l=0}^mB^+_{j,l}\,\al_l-\sum_{l=1}^mA^-_{j,l}\,\be_l=0\qquad(j=1,\dots,M),
\end{equation}
where the notations \eqref{eqn:abplus} and \eqref{eqn:abminus} are being used. 

\par These equations tell us that the vector 
\begin{equation}\label{eqn:vectoralbebis}
(\al,\be):=(\al_0,\al_1,\dots,\al_m,\be_1,\dots,\be_m)
\end{equation}
(i.e., the coefficient vector of $p$) belongs to the subspace 
\begin{equation}\label{eqn:nullspacebis}
\mathcal N:=\text{\rm ker}\,\mathfrak M,
\end{equation}
the kernel of the linear map $\mathfrak M:\R^{2m+1}\to\R^{2M}$ defined by \eqref{eqn:blockmatrix}. 

\par To summarize, the functions $G\in H^\infty$ satisfying \eqref{eqn:govireal} and \eqref{eqn:prodfghonek} are precisely those of the form \eqref{eqn:geqpphi}, 
where $p=p_{(\al,\be)}$ is an $m$-symmetric polynomial whose coefficient vector \eqref{eqn:vectoralbebis} is in $\mathcal N$. (We write $p_{(\al,\be)}$ for the polynomial \eqref{eqn:symmpolybis} with coefficients $\ga_0,\dots,\ga_m$ given by \eqref{eqn:galbebis}.) The functions $G$ of interest are thereby nicely parametrized by vectors from $\mathcal N$, and it is the dimension of $\mathcal N$ that we should now look at. 

\par First of all, we always have $\dim\mathcal N\ge1$. Indeed, setting $G=I$ obviously makes \eqref{eqn:govireal} and \eqref{eqn:prodfghonek} true. The corresponding $m$-symmetric polynomial in \eqref{eqn:geqpphi} is then 
$$\wt p(z):=I(z)/\Phi_0(z)=\prod_{j=1}^m(z-a_j)(1-\ov a_jz),$$
so its coefficient vector, say $(\wt\al,\wt\be)$, is a non-null element of $\mathcal N$. Now, if $\dim\mathcal N=1$, then $\mathcal N$ is spanned by $(\wt\al,\wt\be)$, and the only possible polynomials $p$ in \eqref{eqn:geqpphi} are constant multiples of $\wt p$; equivalently, the only functions $G\in H^\infty$ that obey \eqref{eqn:govireal} and \eqref{eqn:prodfghonek} are constant multiples of $I$. On the other hand, if $\dim\mathcal N>1$, then we can find a vector $(\al,\be)\in\mathcal N$ which is not a scalar multiple of $(\wt\al,\wt\be)$; plugging the corresponding $m$-symmetric polynomial $p=p_{(\al,\be)}$ into \eqref{eqn:geqpphi}, we arrive at a function $G\in H^\infty$ with properties \eqref{eqn:govireal} and \eqref{eqn:prodfghonek} for which $G/I\ne\const$. 

\par By virtue of Lemma \ref{lem:charextrhone}, we can now conclude that a unit-norm function $f=IF\in H^1_{\mathcal K}$ satisfying condition (a) is an 
extreme point of $\text{\rm ball}(H^1_{\mathcal K})$ if and only if the kernel $\mathcal N(\subset\R^{2m+1})$ of the associated linear map $\mathfrak M$ has dimension $1$. Finally, we know from the rank-nullity theorem (see, e.g., \cite[p.\,63]{Axl}) that 
$$\text{\rm rank}\,\mathfrak M+\text{\rm dim}\,\mathcal N=2m+1,$$
so we may restate the condition $\dim\mathcal N=1$ as $\text{\rm rank}\,\mathfrak M=2m$. This completes the proof.

\section{Proof of Theorem \ref{thm:expopoipuhone}}

Let $f$ be a function satisfying the theorem's hypotheses. As before, we write $f=IF$ with $I$ inner and $F$ outer. Because $f$ is an extreme point of $\text{\rm ball}(H^1_{\mathcal K})$, we know from Theorem \ref{thm:extrpoipuhone} that $I$ is a finite Blaschke product with $\deg I(=:m)$ not exceeding $M$. Thus, 
$$I(z)=\prod_{j=1}^m\f{z-a_j}{1-\ov a_jz}$$
for certain $a_1,\dots,a_m\in\D$. 

\par Our plan is to prove that $f$ is an exposed point of $\text{\rm ball}(H^1_{\mathcal K})$ by verifying condition (iii.2) of Lemma \ref{lem:charexpohone}, with $X=H^1_{\mathcal K}$. To this end, assume that $G\in N^+$ is a function for which 
\begin{equation}\label{eqn:firproexpo}
G/I\ge0\quad\text{\rm a.e. on }\T
\end{equation}
and 
\begin{equation}\label{eqn:secproexpo}
FG\in H^1_{\mathcal K}.
\end{equation}
Clearly, the function $U:=FG$ is then {\it a fortiori} in $L^1$; we also have $1/F\in L^1$ (since $1/f\in L^1$ by hypothesis, while $|F|=|f|$ a.e. on $\T$), and we combine the two facts to infer that $G=U/F\in L^{1/2}$. This in turn implies that $G$ is actually in $H^{1/2}$ ($=N^+\cap L^{1/2}$). 

\par We may now apply Lemma \ref{lem:posquo}, with $G$ and $I$ in place of $\psi$ and $B$, to conclude that 
$$G(z)=p(z)\prod_{j=1}^m(1-\ov a_jz)^{-2}$$
for some $m$-symmetric polynomial $p$. As a consequence, we see that $G\in H^\infty$. On the other hand, being an extreme point of $\text{\rm ball}(H^1_{\mathcal K})$, the function $f$ obeys condition (iii.1) of Lemma \ref{lem:charextrhone} with $X=H^1_{\mathcal K}$. This means that {\it every} function $G\in H^\infty$ satisfying \eqref{eqn:secproexpo} and making $G/I$ real-valued a.e. on $\T$ is given by $G=cI$ for some $c\in\R$. In particular, our current $G$ is necessarily of this form, the constant $c$ being actually nonnegative in view of \eqref{eqn:firproexpo}. 

\par We have thereby checked condition (iii.2) of Lemma \ref{lem:charexpohone}, with $X=H^1_{\mathcal K}$. The lemma then tells us that $f$ is an exposed point of $\text{\rm ball}(H^1_{\mathcal K})$, and the proof is complete. 

\medskip

\end{document}